\newcommand{\R}{\mathbb{R}}
\newcommand{\C}{\mathbb{C}}
\newcommand{\Z}{\mathbb{Z}}
\newcommand{\dd}{\mathrm{d}}
\newtheorem{theorem}{Theorem}
\newtheorem{lemma}{Lemma}
\newtheorem{definition}{Definition}
\newtheorem{remark}{Remark}
\title{Series decomposition of a class of special integrals}
\author{%
	Xiaolei Yang
	\\[6pt]
	\begin{minipage}{\textwidth}
		\centering
		\small  
		School of Mathematics and Information Science, Henan Polytechnic University, Jiaozuo 454003, China (Email: xlyang@hpu.edu.cn)
	\end{minipage}
}
\date{}
\begin{document}

\maketitle

\begin{abstract}
In this paper, we propose a new method for calculating integrals for a special class of integrands. As an application, we show how this method can be used to derive optimal pointwise temporal estimates for a class of nonlocal evolution equations. Compared with other methods, our approach can obtain both upper bound and lower bound simultaneously.
\end{abstract}

\section{Introduction}

  It is well known that integration is one of the most important mathematical tools, especially in the field of differential equations, where it is almost unavoidable. There have been extensive studies carried out in the framework of Sobolev spaces via the energy method, yielding many elegant and profound conclusions. However, such results usually reveal global properties rather than pointwise information. Here, for a special class of integrands, we propose a method to decompose the integral into a series. As an example, we show how it works on the nonlocal evolution equation with the form
\begin{equation}\label{1}
	\begin{cases}
		u_t(t,x) = d(J*u)(t,x) - u(t,x), & t > 0, x \in \mathbb{R} \\
		u(0,x) = u_0(x), & x \in \mathbb{R}
	\end{cases}
\end{equation}
where $d>0$ and \(\left( {J * u} \right)\left( {t,x} \right) = \int_{\mathbb{R}} {J\left( {x - y} \right)} u\left( {t,y} \right)\dd y.\) Throughout the text, assume that \(J(x)\) satisfies condition: 
\begin{flushleft}
	\textbf{(H1)} \(J(x)\in \C(\R)\) is a nonnegative function with \(\int_{\mathbb{R}} J(x) \dd x = 1\).
\end{flushleft} 

 As far as we know, Eq. \eqref{1} originates from Chasseigne, Chaves and Rossi\cite{Chasseigne2006}, where they discussed the asymptotic behavior of initial and boundary value problems for nonlocal diffusion equations $u_t = J*u - u.$ When different weights are assigned to the concentrations of substances at the same point for arrival and departure, we may get Eq. \eqref{1}. This scenario is quite common. For instance, many shopping websites pay more attention to the inflow of new customers than to the retention of existing ones.

 The method proposed here is based on the Leibniz test and Fourier transform. The remainder of the paper is organized as follows. In section 2, we presents the definition of the Fourier transform we adopt and its inversion. In section 3, we give the main results and the proof. In section 4, an example is given. 

\section{Preliminaries}

We recall some definitions and properties provided below which can be found in \cite{Grafakos2008}.

\begin{definition}
	For \( u \in L^1(\mathbb{R}) \), its Fourier transform \(\hat{u}=\mathscr{F}(u)\) and inverse Fourier transform \(\check{u}=\mathscr{F}^{-1}(u)\) are respectively defined as  
	\[
	\hat{u}(\xi) = \int_{-\infty}^{\infty} u(x) e^{-2\pi i x\xi} \, dx,  
	\quad  
	\check{u}(x) = \int_{-\infty}^{\infty} u(\xi) e^{2\pi i x\xi} \, d\xi.
	\]
	For \( u \in \mathscr{S}'(\mathbb{R}) \), the Fourier transform \(\hat{u}\) and inverse Fourier transform \(\check{u}\) are defined in the sense of distributions, that is, 
	\[\left\langle {\hat u,g}\right\rangle = \left\langle {u,\hat g} \right\rangle ,\quad\left\langle {\check{u} ,g} \right\rangle  = \left\langle {u,\check g} \right\rangle ,\quad \forall g\in \mathscr{S}(\mathbb{R}).\]
\end{definition}

\begin{lemma}[\cite{Grafakos2008}]\label{lem1}
	Given \(u\) in \(\mathscr{S}'(\mathbb{R}^n)\), we have \((\hat{u})\check{ }=u\).
\end{lemma}

\begin{lemma}\label{lem2}
	For any $\varepsilon>0$, for any $\xi_0\neq0$, there exists an even continuous probability density $J(x)$ such that \(\hat{J}(\xi)\) is radially decreasing and satisfies
	\begin{equation*}
		\hat{J}(\xi_0)>1-\varepsilon.
	\end{equation*}
\end{lemma}
\begin{proof}
	Consider the centered normal density \( J_\sigma (x) = \frac{1}{\sqrt{2\pi}\sigma} e^{-\frac{x^2}{2\sigma^2}}\) with \(\sigma>0. \)
\end{proof}

\section{Main results}

\begin{theorem}\label{thm1}
 Let $u_{0}(x)\in \mathscr{S}'(\mathbb{R})$ and $\hat{u}_{0}(x)\in L^{1}(\R)$. Under the condition of \textbf{(H1)}, denote the solution of Eq.\eqref{1} as \(u(t,x)\), then,
\begin{equation}\label{T1}
	|u(t,x)|\leq \|\hat{u}_0\|_{L^1(\R)}e^{(d-1)t}.
\end{equation}
Furthermore, $e^{(d-1)t}$ is sharp in the sense that for any $\varepsilon>0$, there are $J(x)$ and $  u_0(x)$  such that $u(t,x)$ satisies 
\begin{equation}\label{T2}
		g(x)e^{(d-1-\varepsilon)t}<|u(t,x)|<h(x)e^{(d-1)t},
\end{equation}
 where $g(x), h(x)>0$ are determined only by $u_{0}(x)$. 
\end{theorem}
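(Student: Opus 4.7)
The plan is to work in Fourier space for both directions of \eqref{T2}. Taking the Fourier transform of \eqref{1} in $x$ reduces it to the ODE $\partial_t\hat u=(d\hat J-1)\hat u$, whose solution is $\hat u(t,\xi)=e^{(d\hat J(\xi)-1)t}\,\hat u_0(\xi)$. Hypothesis \textbf{(H1)} gives $|\hat J(\xi)|\le \int_\R J=1$, so $|\hat u(t,\xi)|\le e^{(d-1)t}|\hat u_0(\xi)|$. Since $\hat u_0\in L^1(\R)$, Lemma \ref{lem1} applied to $\hat u(t,\cdot)$ yields
\[
  |u(t,x)| = \left|\int_\R \hat u(t,\xi)\,e^{2\pi ix\xi}\,\dd\xi\right| \le e^{(d-1)t}\|\hat u_0\|_{L^1(\R)},
\]
which is \eqref{T1}; in \eqref{T2} we may take $h(x)\equiv\|\hat u_0\|_{L^1}$.

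For the sharpness part, the idea is to construct $u_0$ and $J$ that essentially saturate the bound. I would fix $\xi_0>0$ and choose $\hat u_0$ even, continuous, nonnegative, and strictly decreasing on $[0,\xi_0]$ with support in $[-\xi_0,\xi_0]$. Then apply Lemma \ref{lem3} with parameter $\varepsilon/d$ at $\xi_0$ to pick a Gaussian $J$ whose Fourier transform is radially decreasing and satisfies $\hat J(\xi_0)>1-\varepsilon/d$. Radial monotonicity then yields $d\hat J(\xi)-1 > d-1-\varepsilon$ uniformly on $[-\xi_0,\xi_0]$. Using the evenness of $\hat u_0$ and $\hat J$ I rewrite
\[
  u(t,x) = e^{(d-1-\varepsilon)t}\cdot 2\int_0^{\xi_0} f_t(\xi)\cos(2\pi x\xi)\,\dd\xi, \qquad f_t(\xi) := \hat u_0(\xi)\,e^{(d\hat J(\xi)-d+\varepsilon)t},
\]
where $f_t$ is nonnegative and strictly decreasing in $\xi\in[0,\xi_0]$ for every $t\ge 0$, because both factors have this property.

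The Leibniz alternating-series test now enters through the cosine integral. For fixed $x\ne 0$, I would partition $[0,\xi_0]$ at the zeros $\xi_k=(2k+1)/(4|x|)$ of $\cos(2\pi x\xi)$; on these pieces the cosine has alternating signs, and by the monotonicity of $f_t$ together with the translation-periodicity of $|\cos|$ the absolute values of the subintegrals form a decreasing sequence (at least from the second piece onward). The Leibniz test then produces a strictly positive lower bound on $\int_0^{\xi_0} f_t(\xi)\cos(2\pi x\xi)\,\dd\xi$ of the form $g(x)/2$, with $g(x)>0$ depending only on $u_0$ and $x$; multiplying by $e^{(d-1-\varepsilon)t}$ gives the lower half of \eqref{T2}.

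The main obstacle will be making this Leibniz bound uniformly positive in $t\ge 0$. The first piece $[0,1/(4|x|)]$ is only a quarter-period while later pieces are half-periods, so the naive ``$A_0-A_1$'' difference need not be positive; this can be circumvented either by pairing the first two subintervals or, where convenient, by restricting to $|x|<1/(4\xi_0)$, where $\cos(2\pi x\xi)$ has constant sign on the entire support of $\hat u_0$ and the bound is transparent. A subtler point is that, because $f_t$ becomes increasingly concentrated at $\xi=0$ as $t\to\infty$, one must verify that the Leibniz bound does not collapse to zero at some intermediate time; the uniform-in-$t$ strict monotonicity of $f_t$ in $\xi$ is precisely what is designed to rule this out.
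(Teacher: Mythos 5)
Your upper bound argument is correct and coincides with the paper's Step 1, and the overall architecture of your sharpness argument is right: take $J$ Gaussian via Lemma~\ref{lem3}, factor out $e^{(d-1-\varepsilon)t}$, partition the cosine integral at its zeros $\xi_k=(2k\pm1)/(4x)$, and invoke the Leibniz test, with the key structural observation that $f_t(\xi)=\hat u_0(\xi)e^{(d\hat J(\xi)-d+\varepsilon)t}$ stays nonnegative, even, and radially decreasing for every $t\ge 0$. That monotonicity \emph{does} resolve the time-uniformity worry you raise, once the $t=0$ case is secured, since for a positive radially decreasing multiplier $m$ one has
\[
\int_0^{\frac{3}{4x}}\hat u_0\,m\cos(2\pi x\xi)\,\dd\xi
\ \ge\ m\!\left(\tfrac{1}{4x}\right)\int_0^{\frac{3}{4x}}\hat u_0\cos(2\pi x\xi)\,\dd\xi ,
\]
which is essentially the paper's inequalities \eqref{5}--\eqref{6}.

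The genuine gap is in the choice of $\hat u_0$. After pairing the first quarter-period with the adjacent half-period (your ``pairing'' fix, and the paper's passage from $B_k$ to $C_n$), positivity of the Leibniz lower bound requires exactly the paper's condition \textbf{(H2)}: $\int_0^{3/(4x)}\hat u_0(\xi)\cos(2\pi x\xi)\,\dd\xi>0$ for all $x>0$. Being continuous, nonnegative, compactly supported, and strictly decreasing on $[0,\xi_0]$ does \emph{not} imply this. In fact no $\hat u_0$ that is continuous at the origin with $\hat u_0(0)<\infty$ (and $\not\equiv 0$, hence $\hat u_0(0)>0$) can satisfy it: as $x\to\infty$, $\hat u_0\approx\hat u_0(0)$ on $[0,3/(4x)]$, and $\int_0^{3/(4x)}\cos(2\pi x\xi)\,\dd\xi=\sin(3\pi/2)/(2\pi x)=-1/(2\pi x)$, so the integral is eventually negative. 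Equivalently, your $u_0=\mathscr F^{-1}(\hat u_0)$ is a Paley--Wiener entire function with real zeros, so at $t=0$ no bound $|u(0,x)|\ge g(x)>0$ can hold for all $x$. Your fallback of restricting to $|x|<1/(4\xi_0)$ only proves the lower bound on a bounded $x$-interval, which is weaker than the claim. The paper sidesteps this by taking $\hat u_0(\xi)=e^{-|\xi|}/\sqrt{|\xi|}$, whose integrable singularity at the origin makes the quarter-period piece dominate (quantified via the Fresnel cosine integral in Section~5). The missing ingredient in your proposal is therefore the extra quantitative hypothesis \textbf{(H2)}, together with the recognition that it forces $\hat u_0$ to blow up at $\xi=0$, ruling out continuous compactly supported candidates.
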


\begin{proof}
When \( J(x) = \delta(x) \), noting that \( (\delta * u)(t,x) = u(t,x) \), system \eqref{1} then reduces to the local case
\begin{equation*}
\begin{cases}
u_t(t,x) = (d-1)u(t,x), & t > 0, x \in \mathbb{R}, \\
u(0,x) = u_0(x), & x \in \mathbb{R},
\end{cases}
\end{equation*}
which has the solution \( u(t,x) = e^{(d-1)t}u_0(x) \). The rate \(e^{(d-1)t}\) emerges here.

{\bf Step 1:} we show that \(e^{(d-1)t}\) is an upper bound. Let $u_{0}(x)\in \mathscr{S}'(\mathbb{R})$ and $\hat{u}_{0}(x)\in L^{1}(\R)$. Applying the Fourier transform to \eqref{1} gives
\(\hat{u}(t,\xi) = e^{t(d\hat{J}(\xi)-1)}\hat{u_0}(\xi)\in L^1(\R)\). The result follows from the fact that $\parallel \hat{f}\parallel_{L^{\infty}(\mathbb{R})}\leq\parallel f\parallel_{L^{1}(\mathbb{R})}$ for $f\in L^{1}(\R)$.

{\bf Step 2:} We choose suitable functions to ensure the sharpness of \textbf{Theorem \ref{thm1}}. Here we impose the following condition:\\
\textbf{(H2)} \(\hat{u}_0\) is a nonagetive even function and radially decreasing, whatsmore, for any \(x>0\),
\begin{equation}\label{h2}
	\int_0^{\frac{3}{4x}}\hat{u}_0(\xi)\cos(2\pi x\cdot\xi)\dd\xi>0.
\end{equation}

From now on, let \(J\) is a centered normal distribution whose parameter \(\sigma\) to be determined later. Then,
\begin{align*}
	u(t,x) &= \mathscr{F}^{-1}\left(\hat{u}(t,\xi)\right) = \int_{\mathbb{R}} e^{t(d\hat{J}(\xi)-1)}\hat{u}_0(\xi)e^{2\pi i x \xi}\dd \xi \\
	&= \int_{\mathbb{R}} e^{t(d\hat{J}(\xi)-1)}\hat{u}_0(\xi)(\cos 2\pi x \xi + i \sin 2\pi x \xi)\dd \xi \\
	&= \int_{\mathbb{R}} e^{t(d\hat{J}(\xi)-1)}\hat{u}_0(\xi)\cos(2\pi x \xi)\dd \xi.
\end{align*}
where we have used the facts that \(\sin (2\pi x \xi)\) is an odd function and \(e^{t(d\hat{J}(\xi)-1)}\hat{u}_0(\xi)\in L^1(\R)\). It is easy to know that \(u(t,x)\) is even, then, it suffices to consider the case \(x \geq 0\). 

 When \(x > 0\), let
\[
A_k = \int_{\frac{2k-1}{4x}}^{\frac{2k+1}{4x}} \hat{u}_0 (\xi) \cos (2\pi x \xi) \dd \xi, \quad k \in \Z;
\]
\[
B_k = \int_{\frac{2k-1}{4x}}^{\frac{2k+1}{4x}} e^{t(d\hat{J}(\xi)-1)} \hat{u}_0 (\xi) \cos (2\pi x \xi) \dd \xi, \quad k \in \mathbb{Z}.
\]
Here, \(A_k\) and \(B_k\) are functions of \(x\), and the same applies to \(C_k\) mentioned later. Note that \(\hat{J}(\xi)=e^{-2\pi^2 \sigma^2\xi^2}\) is a radially decreasing positive function, and for every \(k\), the integration intervals for both \(A_k\) and \(B_k\) are \(\left( (\frac{2k-1}{4x}),(\frac{2k+1}{4x})\right) \), then \(2\pi x\xi\in (k\pi-\frac{\pi}{2},k\pi+\frac{\pi}{2})\), thus integrands associated with \(A_k\) and \(B_k\) do not change sign in the corresponding interval. With these properties, we can show that:

(i) \( A_k = A_{-k}, B_k = B_{-k} \); Moreover,
\[
A_k, B_k
\begin{cases} 
	> 0, & k = 2m;\\ 
	< 0, & k = 2m + 1.
\end{cases}
\]

(ii) \(B_k\) can be estimated by \(A_k\),
\begin{equation}\label{5}
	|A_k|e^{t\left( d\hat{J}\left( \frac{2k+1}{4x} \right)-1 \right)} < |B_k| < |A_k|e^{t\left( d\hat{J}\left( \frac{2k-1}{4x} \right)-1 \right)}, \quad |k| \geq 1
\end{equation}
\begin{equation}\label{6}
	|A_0|e^{t\left( d\hat{J}\left( \frac{1}{4x} \right)-1 \right)} < |B_0| < |A_0|e^{t(d-1)}.
\end{equation}

(iii) \(|A_k|\) and \(|B_k|\) are both monotonically decreasing for \(k\geq0\). Indeed, when \(k\geq 1\),
\begin{align*}
	|A_{k+1}| &= \left| \int_{\frac{2k+1}{4x}}^{\frac{2k+3}{4x}} \hat{u}_0 (\xi) \cos (2\pi x \xi) \dd \xi\right|=\int_{\frac{2k+1}{4x}}^{\frac{2k+3}{4x}} \left| \hat{u}_0 (\xi) \cos (2\pi x \xi)\right|  \dd \xi  \\
	&=  \int_{\frac{2k-1}{4x}}^{\frac{2k+1}{4x}} \left|\hat{u}_0 \left(\xi+\frac{1}{2x}\right) \cos (2\pi x \xi + \pi)\right| \dd \xi= \int_{\frac{2k-1}{4x}}^{\frac{2k+1}{4x}} \left| \hat{u}_0 \left(\xi+\frac{1}{2x}\right) \cos (2\pi x \xi)\right|  \dd \xi  \\
	&< \int_{\frac{2k-1}{4x}}^{\frac{2k+1}{4x}} \left| \hat{u}_0 \left(\xi\right) \cos (2\pi x \xi)\right|\dd \xi=|A_k|.
\end{align*}
Besides, \(|A_0|>|A_1|\) also holds since \(|\xi+\frac{1}{2x}|<|\xi|\) for \(\xi\in (-\frac{1}{4x},\frac{1}{4x})\). Taking into consideration that \( e^{t(d\hat{J}(\xi)-1)}\hat{u}_0(\xi)\) also decreases radially, we obtain the monotonicity of \(|B_k|\) for \(k\geq0\).

(iv) Since \(\hat{u}_0, e^{t(d\hat{J}-1)} \hat{u}_0\in L^1(\R)\), then, series \(A_n\) and \(B_n\) are absolutely convergent. It follows that \(A_k, B_k\rightarrow0\) as \(k\rightarrow \infty\).

Now, define sequence \(\{C_n\}_{n=0}^\infty\) as
\[C_n = 
\begin{cases} 
	B_0, & n = 0, \\ 
	B_n+B_{-n}, & n > 0. 
\end{cases}\]
Then, we have
\begin{align*}
u(t,x) = \int_{\mathbb{R}} e^{t(d\hat{J}(\xi)-1)}\hat{u}_0(\xi)\cos(2\pi x \xi)\dd \xi
= \sum_{k=-\infty}^{\infty} B_k=\sum_{k=0}^{\infty} C_k.
\end{align*}
 Combined with E.q. \eqref{5}, \eqref{6}, we know that 
 \[0<A_0e^{t\left( d\hat{J}\left( \frac{1}{4x} \right)-1 \right)} < C_0=B_0 < A_0e^{t(d-1)},\]
 	\[A_1 e^{t\left( d\hat{J}\left( \frac{1}{4x} \right)-1 \right)} < B_1=B_{-1} < A_1 e^{t\left( d\hat{J}\left( \frac{3}{4x} \right)-1 \right)}, \]
 Thus \(C_1=B_1+B_{-1}\geq2A_1 e^{t\left( d\hat{J}\left( \frac{1}{4x} \right)-1 \right)}\). Note that condition \textbf{(H2)} means \(\frac{1}{2}A_0+A_1>0\), hence \(\{C_n\}_{n=0}^{\infty}\) is also an alternating series that Leibniz test can be applied. What's more,
  \begin{equation}\label{7}
  	(A_0 + 2A_1)e^{t(d\hat{J}(\frac{1}{4x})-1)} <C_0+C_1< u(t,x) <C_0< A_0e^{t(d-1)}.
  \end{equation}
  
 Fix \(x_0>0\) small enough such that
 \begin{equation*}
 	\int_{-\frac{1}{8x_0}}^{\frac{1}{8x_0}}\hat{u}_0(\xi)\dd \xi\geq \frac{3\|\hat{u}_0\|_{L^1(\R)}}{4},
 \end{equation*}
 Then, for \(0\leq x<x_0 \),
 \begin{align*}
 	u(t,x) &= \int_{\mathbb{R}} e^{t(d\hat{J}(\xi)-1)}\hat{u}_0(\xi)\cos(2\pi x \xi)\dd \xi \\
 	&= \int_{|\xi|\leq \frac{1}{8x_0}} e^{t(d\hat{J}(\xi)-1)}\hat{u}_0(\xi)\cos(2\pi x \xi)\dd \xi+ \int_{|\xi|> \frac{1}{8x_0}} e^{t(d\hat{J}(\xi)-1)}\hat{u}_0(\xi)\cos(2\pi x \xi)\dd \xi\\
 	&\geq \int_{|\xi|\leq \frac{1}{8x_0}} e^{t(d\hat{J}(\xi)-1)}\hat{u}_0(\xi)\cos(2\pi x \xi)\dd \xi-\int_{|\xi|> \frac{1}{8x_0}}\left|  e^{t(d\hat{J}(\xi)-1)}\hat{u}_0(\xi)\cos(2\pi x \xi)\right| \dd \xi\\
 	&\geq \frac{\sqrt{2}}{2}\int_{|\xi|\leq \frac{1}{8x_0}} e^{t(d\hat{J}(\xi)-1)}\hat{u}_0(\xi)\dd \xi-\int_{|\xi|> \frac{1}{8x_0}}\left|  e^{t(d\hat{J}(\xi)-1)}\hat{u}_0(\xi)\cos(2\pi x \xi)\right| \dd \xi\\
 	&\geq \frac{\sqrt{2}}{2}e^{t(d\hat{J}(\frac{1}{8x_0})-1)}\int_{|\xi|\leq \frac{1}{8x_0}} \hat{u}_0(\xi)\dd \xi-e^{t(d\hat{J}(\frac{1}{8x_0})-1)}\int_{|\xi|> \frac{1}{8x_0}}\left| \hat{u}_0(\xi)\cos(2\pi x \xi)\right| \dd \xi\\
 	&\geq \frac{3\sqrt{2}}{8}e^{t(d\hat{J}(\frac{1}{8x_0})-1)} \|\hat{u}_0\|_{L^1(\R)}-e^{t(d\hat{J}(\frac{1}{8x_0})-1)}\int_{|\xi|> \frac{1}{8x_0}}\left| \hat{u}_0(\xi)\right| \dd \xi\\
 	&\geq \left( \frac{3\sqrt{2}}{8}-\frac{1}{4}\right)\|\hat{u}_0\|_{L^1(\R)}e^{t(d\hat{J}(\frac{1}{8x_0})-1)} .
 \end{align*}
 
 \eqref{7} shows that for \(x\geq x_0\), 
 \[u(t,x)\geq (A_0 + 2A_1)e^{t(d\hat{J}(\frac{1}{4x_0})-1)}.\] 
 In summary, we have
 \[u(t,x)\geq c(x)e^{t(d\hat{J}(\frac{1}{4x_0})-1)}\]
 where
 \[c(x)  :=  \left\{ \begin{array}{l}
 	\frac{{3\sqrt 2-2 }}{8},\quad 0 \le x < {x_0},\\
 	{A_0} + 2{A_1},\quad x > {x_0}.
 \end{array} \right.\]
Remember that \(\hat{J}(\xi)=e^{-2\pi^2 \sigma^2\xi^2}\), then, for any \(\varepsilon > 0\), ther exists \(\sigma > 0\) such that \(\hat{J}\left(\frac{1}{4x}\right) > 1 - \frac{\varepsilon}{d}\), then
 \[e^{t(d\hat{J}(\frac{1}{4x})-1)} >e^{t(d-\varepsilon-1)},\]
  that is, \(e^{t(d-1)}\) is sharp.
\end{proof} 

\section{Examples}

We claim that there exist functions satisfying \textbf{(H2)} in \(L^1(\R)\). Indeed, \(\frac{e^{-|\xi|}}{\sqrt{\xi}}\) is such a function. A direct calculation gives
\[\int_0^{\frac{3}{{4x}}} {\frac{{\cos \left( {2\pi x \cdot \xi } \right)}}{{\sqrt \xi  }}\dd\xi  = \frac{1}{{\sqrt x }}} \mathrm{FresnelC}\left( {\sqrt 3 } \right) > 0,\]
where \(\mathrm{FresnelC}(x)\) is the Fresnel cosine integral defined by
\[\mathrm{FresnelC}(x)= \int_0^x {\cos \left( {\frac{\pi }{2}{t^2}} \right)} dt.\]
Thus, we have
\[\int_0^{\frac{1}{{4x}}} {\frac{{{e^{ - \xi }}}}{{\sqrt \xi  }}} \cos \left( {2\pi x \cdot \xi } \right)d\xi  > {e^{ - \frac{1}{{4x}}}}\mathrm{FresnelC}(1)>0,\]
\[ {e^{ - \frac{1}{{4x}}}}\left( {\mathrm{FresnelC}(\sqrt 3 ) - \mathrm{FresnelC}(1)} \right)<\int_{\frac{1}{{4x}}}^{\frac{3}{{4x}}} {\frac{{{e^{ - \xi }}}}{{\sqrt \xi  }}} \cos \left( {2\pi x \cdot \xi } \right)\dd\xi<0 .\] 
Then, we obtain
\[\int_0^{\frac{3}{{4x}}} {e^{ - \xi }\frac{{\cos \left( {2\pi x \cdot \xi } \right)}}{{\sqrt \xi  }}\dd\xi>\frac{1}{{e^{\frac{1}{{4x}}}\sqrt x }}} \mathrm{FresnelC}\left( {\sqrt 3 } \right) > 0.\]
Besides, \(\frac{e^{-|\xi|}}{\sqrt{|\xi|}}\in L^1(\R)\) is obvious.

We now turn our attention to \(\mathscr{F}^{-1}(\hat{u}_0)\):
\begin{align*}
	\mathscr{F}^{-1}(\hat{u}_0)&=\int_\R {\frac{1}{{{e^{  \left| \xi  \right|}}\sqrt {\left| \xi  \right|} }}} {e^{2\pi ix \cdot \xi }}\dd\xi  = 2\int_0^\infty   {\frac{1}{{{e^\xi}\sqrt {\xi} }}} \cos(2\pi x\cdot \xi)\dd\xi \\
	 & =2\mathrm{Re}\int_0^\infty \xi^{-\frac{1}{2}} e^{-(1+2\pi xi)\xi}\dd\xi=2\mathrm{Re}\left((1+2\pi x i)^{-\frac{1}{2}}\Gamma\left( \frac{1}{2}\right)  \right)\\
	 &=2\sqrt{\pi} \mathrm{Re}(1+2\pi x i)^{-\frac{1}{2}}
\end{align*}
Denote \(1+2\pi x i=\sqrt{1+4\pi^2x^2}e^{i\theta}\), where \(\sin\theta=\frac{1}{\sqrt{1+4\pi^2 x^2}},  \cos\theta=\frac{2\pi x}{\sqrt{1+4\pi^2x^2}}\). Then,
\[(1+2\pi \xi i)^{-\frac{1}{2}}=(1+4\pi^2x^2)^{-\frac{1}{4}}e^{-\frac{i\theta}{2}.}\]
Thus, \(\mathscr{F}^{-1}(\hat{u}_0)(x)=2\sqrt{\pi}(1+4\pi^2x^2)^{-\frac{1}{4}}\cos(\frac{\theta}{2})=\sqrt{2\pi}\sqrt{\frac{\sqrt{1+4\pi^2x^2}+1}{1+4\pi^2x^2}}\in \mathscr{S}'(\R).\) By \textbf{Lemma \ref{lem1}}, \(u_0=\mathscr{F}^{-1}(\hat{u}_0)\) in \(\mathscr{S}'(\R).\)

\begin{remark}
	Indeed, \(\frac{f(\xi)}{\sqrt{\xi}}\) is suitable whenever \(f\in L^1(\R)\) is a positive function and decreases radially.
\end{remark}

\begin{remark}	
	We can consider other series such as geometric series. 
\end{remark}

\section{Acknowledgements}

The author would like to express gratitude to Professor Yanghai Yu for his series of suggestions on the presentation of the article. X. Yang is supported by the Natural Science Foundation of Henan Province (Grant No. 242300421682), the Fundamental Research Funds for the Universities of Henan Province (Grant No. NSFRF2502092), the Doctor’s funding of Henan Polytechnic University (Grant No. B2021-56).

\end{document}